\tikzstyle{vertex}=[circle, draw, inner sep=0pt, minimum size=6pt]
\newtheorem{thm}{Theorem}[section]
\newtheorem{lemma}[thm]{Lemma}
\theoremstyle{definition}
\newtheorem{definitions}[thm]{Definitions}
\newtheorem{op}[thm]{Open Problem}
\begin{document}

\title [On minimal edge version of doubly resolving sets of a graph ]{On minimal edge version of doubly resolving sets of a graph }
\author{Muhammad Ahmad, Zohaib Zahid, Sohail Zafar.}

\address{University of Management and Technology (UMT), Lahore Pakistan}
\email {m.ahmad150092@gmail.com}
\email {sohailahmad04@gmail.com}
\email {zohaib\_zahid@hotmail.com}
\subjclass[---]{Primary 05C12.}
\keywords{ edge version of metric dimension, edge version of doubly resolving set,  prism graph, $n$-sunlet graph }
\doublespacing
\maketitle
\begin{abstract}
In this paper, we introduce the edge version of doubly resolving set of a graph which is based on the edge distances of the graph. As a main result, we computed the minimum cardinality $\psi_E$ of edge version of doubly resolving sets of family of $n$-sunlet graph $S_n$ and prism graph $Y_n$.

\end{abstract}
\section {\textbf{Introduction and Preliminaries}}
Let us take a graph $G=(V(G), E(G)),$ which is simple, connected and undirected, where its vertex set is $V(G)$ and edge set is $E(G)$. The order of a graph $G$ is $|V(G)|$ and the size of a graph $G$ is $|E(G)|$. The distance $d(a, b)$ between the vertices $a$, $b\in V(G)$ is the length of a shortest path between them. If $d(c, a)\neq d(c, b)$, then the vertex $c\in V(G)$ is said to resolve two vertices $a$ and $b$ of $V(G)$. Suppose that $N=\{n_{1}, n_{2},\ldots,n_{k}\}\subseteq V(G)$ is an ordered set and $m$ is a vertex of $V(G)$, then the representation $r(m, N)$ of $m$ with respect to $N$ is the k-tuple $\big(d(m, n_{1}), d(m, n_{2}),\ldots,d(m, n_{k})\big)$. If different vertices of $G$ have different representations with respect to $N$, then the set $N$ is said to be a resolving set of $G$. The metric basis of $G$ is basically a resolving set having minimum cardinality. The cardinality of metric basis is represented by $\dim (G)$, and is called metric dimension of $G.$\\
In \cite{S}, Slater introduced the idea of resolving sets and also in \cite{HM}, Harary and Melter introduced this concept individually. Different applications of this idea has been introduced in the fields like network discovery and verification \cite{BEH}, robot navigation \cite{LG} and chemistry.\\ The introduction of doubly resolving sets is given by Caceres et al. (see \cite{CHM}) by presenting its connection with metric dimension of the cartesian product $G\Box G$ of the graph $G$.\\
The doubly resolving sets create a valuable means for finding upper bounds on the metric dimension of graphs. The vertices $a$ and $b$ of the graph $G$ with order $|V(G)|\geq 2$ are supposed to doubly resolve vertices $u_{1}$ and $v_{1}$ of the graph $G$ if $d(u_{1}, a)-d(u_{1}, b)\neq d(v_{1}, a)-d(v_{1}, b).$ A subset $D$ of vertices doubly resolves $G$ if every two vertices in $G$ are doubly resolved by some two vertices of $D$. Precisely, in $G$ there do not exist any two different vertices having the same difference between their corresponding metric coordinates with respect to $D.$ A doubly resolving set with minimum cardinality is called the minimal doubly resolving set.  The minimum cardinality of a doubly resolving set for $G$ is represented by $\psi(G).$
In case of some convex polytopes, hamming and prism graphs, the minimal doubly resolving sets has been obtained in \cite{KCS1}, \cite{KCS} and \cite{CKS} respectively.\\
Clearly, if $a$ and $b$ doubly resolve $u_{1}$ and $v_{1},$ then $d(u_{1}, a)-d(v_{1}, a)\neq 0$ or $d(u_{1}, b)-d(v_{1}, b)\neq 0,$ and thus $a$ or $b$ resolve $u_{1}$ and $v_{1},$ this shows that a doubly resolving set is also a resolving set, which implies $\dim(G)\leq \psi(G)$ for all graphs $G$. Finding $\psi(G)$ and $\dim(G)$ are NP-hard problems proved in \cite{KR,KC}.\\
Since, the line graph $L(G)$ of a graph $G$ is defined as, the graph whose vertices are the edges of $G$, with two adjacent vertices if the corresponding edges have one vertex common in $G$. In mathematics, the metric properties of line graph have been studied to a great extent (see \cite{B,G,GP,RG,RG1}) and in chemistry literature, its significant applications have been proved (see \cite{GE,GT,GF}). In \cite{NZZ}, the edge version of metric dimension have been introduced, which is defined as:

\begin{definitions}
\begin{enumerate}
  \item The edge distance $d_E(f, g)$ between two edges $f, g \in E(G)$ is the length of a shortest path between vertices $f$ and $g$ in the line graph $L(G).$
  \item   If $d_E(e, f)\neq d_E(e, g)$, then the edge $e\in E(G)$ is said to edge resolve two edges $f$ and $g$ of $E(G)$.
  \item    Suppose that $N_E=\{f_{1}, f_{2},\ldots, f_{k}\}\subseteq E(G)$ is an ordered set and $e$ is an edge of $E(G)$, then the edge version of representation $r_E(e, N_E)$ of $e$ with respect to $N_E$ is the k-tuple $\big(d_E(e, f_{1}), d_E(e, f_{2}),\ldots,d_E(e, f_{k})\big)$.
  \item If different edges of $G$ have different edge version of representations with respect to $N_E$, then the set $N_E$ is said to be a an edge version of resolving set of $G$.
  \item  The edge version of metric basis of $G$ is basically an edge version of resolving set having minimum cardinality. The cardinality of edge version of metric basis is represented by $\dim_E (G)$, and is called edge version of metric dimension of $G.$

\end{enumerate}
\end{definitions}

The following theorems in \cite{NZZ} are important for us.
\begin{thm}\label{t2} Let $S_n$ be the family of $n$-sunlet graph then

$$\,\,\ \dim_E(S_{n})= \left\{ \begin{array}{ll}

2,             & {\rm if}\ n\text{ is even};   \\[2mm]

3 ,     & {\rm if}\  n \text{ is odd.}     \\[2mm]

\end{array} \right.$$

\end{thm}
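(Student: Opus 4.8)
The plan is to exploit the identity $\dim_E(S_n)=\dim(L(S_n))$, so that everything reduces to computing distances in the line graph. First I would fix notation: write $e_1,\dots,e_n$ for the cycle edges ($e_i=v_iv_{i+1}$, indices mod $n$) and $p_1,\dots,p_n$ for the pendant edges ($p_i=u_iv_i$). Reading off the adjacencies in $L(S_n)$ one sees that the $e_i$ form an $n$-cycle while each $p_i$ is joined only to $e_{i-1}$ and $e_i$, so $p_i$ caps the edge $e_{i-1}e_i$ of that cycle in a triangle. From this I would record one closed-form edge-distance formula; it is cleanest to place the edges on a circle of circumference $2n$ by assigning $e_i$ the position $\theta(e_i)=2i$ and $p_i$ the position $\theta(p_i)=2i-1$, and to write $D(a,b)$ for cyclic distance on that circle. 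A short case check then gives $d_E(f,g)=\tfrac12 D(\theta(f),\theta(g))+c(f,g)$, where the correction $c$ equals $0$, $\tfrac12$ or $1$ according as $f,g$ are both cycle edges, of different types, or both pendant edges. This formula is the computational backbone for everything that follows.

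For the upper bounds I would simply display a resolving set of the claimed size and verify that the $2n$ edge-representations are pairwise distinct. For even $n$ I would take two pendant edges at a suitable offset (for instance $p_1,p_3$, with the admissible offset possibly depending on $n\bmod 4$) and check injectivity of $g\mapsto(d_E(g,f_1),d_E(g,f_2))$ by splitting the circle into the arcs on which each coordinate is an affine (tent-shaped) function of the position. For odd $n$ the same method with three well-chosen edges yields $\dim_E(S_n)\le 3$.

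The lower bounds carry the real content. That $\dim_E(S_n)\ge 2$ is immediate, since the diameter of $L(S_n)$ is about $n/2$, so a single landmark realizes fewer than $2n$ distance values and must repeat one. The decisive step is $\dim_E(S_n)\ge 3$ for odd $n$. Using the distance formula I would observe that two edges of the same type are confused by a landmark $f_1$ exactly when they are symmetric about the axis through $\theta(f_1)$, and that a second landmark $f_2$ breaks all such pairs precisely when $\theta(f_1)\not\equiv\theta(f_2)\pmod n$. After arranging this, the only way a $2$-set can still fail is through a cross-type coincidence: a cycle edge $e_i$ and a pendant edge $p_j$ with $d_E(e_i,f_r)=d_E(p_j,f_r)$ for $r=1,2$, which by the formula means the cycle edge lies exactly one $D$-step farther than the pendant edge from each landmark.

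The parity crux is that the unique farthest vertex from a landmark sits at the antipodal position $\theta(f)+n$; when $n$ is odd this antipode has the opposite parity to $\theta(f)$, so the farthest edge from a pendant landmark is a cycle edge and vice versa. I would use this to show that for every placement of two landmarks there is an arc in which a cycle edge is forced to sit one step beyond a pendant edge with respect to both tent-shaped distance functions at once, producing a cross-type collision; hence no $2$-set resolves and $\dim_E(S_n)\ge 3$. For even $n$ the antipode of a pendant position is again a pendant position, the analogous computation shows these coincidences can be designed away, and the construction of the second paragraph attains the bound $2$. I expect the exhaustive offset/parity analysis, guaranteeing that no placement escapes a cross-type coincidence when $n$ is odd, to be the main obstacle; the symmetric-pair reduction is what keeps it manageable.
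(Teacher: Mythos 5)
First, a point of comparison: the paper you were given does not prove this theorem at all --- Theorem \ref{t2} is imported from reference \cite{NZZ}, so there is no in-paper proof to measure your argument against, and your proposal has to stand on its own. Its framework is sound: the identity $\dim_E(S_n)=\dim(L(S_n))$, the description of $L(S_n)$ as an $n$-cycle with each edge capped by a triangle, the circular-position formula $d_E(f,g)=\tfrac12 D(\theta(f),\theta(g))+c(f,g)$ (valid only for $f\neq g$; note $c=1$ gives the wrong answer when $f=g$ is a pendant), the characterization of same-type collisions as symmetry about a landmark's axis, and the reduction, for landmarks not on a common axis, to existence or non-existence of a cross-type collision $D(x,f_r)=D(y,f_r)+1$ for $r=1,2$. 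All of that is correct and is a genuinely good way to organize the problem.

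The gap is that both decisive steps are left as declarations of intent, and the concrete details you do commit to are wrong. For the even upper bound, the witness $\{p_1,p_3\}$ fails: for $n=4$ these landmarks are antipodal (positions $1$ and $5$ on a circle of circumference $8$), so two cycle edges symmetric about their common axis collide; and for every even $n\geq 8$ the pendant at position $7$ and the cycle edge at position $8$ collide cross-type with respect to positions $1$ and $5$ (check: $D(8,1)=7=D(7,1)+1$ and $D(8,5)=3=D(7,5)+1$). Your own arc analysis shows a pendant pair can only work at circle offset $n-2$ (near-antipodal, e.g.\ $p_1$ and $p_{n/2}$), so the admissible offset grows with $n$ rather than depending on $n\bmod 4$, and verifying injectivity for that choice is the actual content of the upper bound --- it is missing. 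For the odd lower bound, the sketch, insofar as it is concrete, produces collisions only from adjacent position pairs $(y,y\pm 1)$ lying in a common increasing arc of both landmarks, and this construction genuinely fails for some placements: with two cycle-edge landmarks at positions $0$ and $n-1$ (both even when $n$ is odd), the relevant arc intersections contain no usable odd position, yet a collision does exist --- it is the non-adjacent pair $x=2n-2$, $y=1$, since $D(2n-2,0)=2=D(1,0)+1$ and $D(2n-2,n-1)=n-1=D(1,n-1)+1$. So the ``exhaustive offset/parity analysis'' you defer is not routine bookkeeping: it must locate non-adjacent colliding pairs in the boundary cases, and until it is carried out for every placement of two landmarks, the inequality $\dim_E(S_n)\geq 3$ for odd $n$ --- the heart of the theorem --- remains unproven.
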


\begin{thm}\label{t1} Let $Y_n$ be the family of prism graph then $\dim_E(Y_n)=3$ for $n\ge3.$
\end{thm}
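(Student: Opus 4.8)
The plan is to establish the two inequalities $\dim_E(Y_n)\le 3$ and $\dim_E(Y_n)\ge 3$ separately. First I would fix notation. Realise $Y_n=C_n\Box K_2$ with inner cycle $u_1,\dots,u_n$, outer cycle $v_1,\dots,v_n$, and spokes $u_iv_i$, and write $a_i=u_iu_{i+1}$, $b_i=v_iv_{i+1}$, $c_i=u_iv_i$ (indices mod $n$), so that $E(Y_n)=\{a_i,b_i,c_i\}$ has $3n$ elements. All the computations rest on the elementary identity $d_E(e,f)=1+\min\{d(p,q):p\in e,\ q\in f\}$ for distinct edges $e,f$, together with the prism vertex distances $d(u_i,u_j)=d(v_i,v_j)=\delta(i,j)$ and $d(u_i,v_j)=\delta(i,j)+1$, where $\delta(i,j)=\min(|i-j|,n-|i-j|)$ is the cyclic distance. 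I also record the three symmetries I will use: the rotation $a_i,b_i,c_i\mapsto a_{i+1},b_{i+1},c_{i+1}$, the reflection $i\mapsto -i$, and the inner--outer flip $\sigma$ with $\sigma(a_i)=b_i$, $\sigma(b_i)=a_i$, $\sigma(c_i)=c_i$.

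For the upper bound I would show that $W_E=\{a_1,b_1,c_1\}$ is an edge resolving set. Using the formula above one obtains, for $i\neq 1$, the representations $r_E(a_i,W_E)=(1+m_i,\,2+m_i,\,1+p_i)$, $r_E(b_i,W_E)=(2+m_i,\,1+m_i,\,1+p_i)$ and $r_E(c_i,W_E)=(1+s_i,\,1+s_i,\,1+\delta(i,1))$, where $m_i=\min\{\delta(x,y):x\in\{i,i+1\},\ y\in\{1,2\}\}$, $p_i=\min(\delta(i,1),\delta(i+1,1))$ and $s_i=\min(\delta(i,1),\delta(i,2))$. The key observation is that (second coordinate)$-$(first coordinate) equals $+1$ on inner edges, $-1$ on outer edges and $0$ on spokes, so no two edges of different types can collide; the finitely many edges adjacent to a landmark (where the self-distances perturb the pattern) are checked by hand. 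Within a single type the first and third coordinates already separate the edges: the pair $(m_i,p_i)$ is injective in $i$ because two inner edges with the same distance $p_i$ to $u_1$ are interchanged by the reflection through $u_1$, which moves the landmark pair $\{u_1,u_2\}$ and hence changes $m_i$, and likewise $(\delta(i,1),s_i)$ is injective on the spokes. Carrying out these short verifications (with a little extra care about antipodal edges when $n$ is even) gives $\dim_E(Y_n)\le 3$.

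For the lower bound I must show that no two edges edge-resolve $Y_n$, and this is the part I expect to be the main obstacle, because there is in general no nontrivial automorphism stabilising an arbitrary pair of landmarks. I would argue by reduction to normal forms: using the rotation I may assume one landmark is $a_1$ or $c_1$, and using $\sigma$ I may take it to be $a_1$ or $c_1$ with the second landmark confined to a restricted range after applying a reflection fixing the first. The cleanest case is two spokes, $W=\{c_s,c_t\}$: then $\sigma$ fixes $W$ pointwise while interchanging $a_i$ and $b_i$, so $r_E(a_i,W)=r_E(b_i,W)$ and $W$ fails. In every remaining case I would exhibit an explicit colliding pair directly from the distance formula; for instance, two inner landmarks $\{a_s,a_t\}$ are defeated by two outer edges lying symmetrically across the spokes near the landmarks (for $\{a_1,a_2\}$ the edges $b_1$ and $b_2$ both have representation $(2,2)$), and the analogous outer, mixed and non-adjacent configurations each yield such a pair on the cycle opposite the two landmarks. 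Since every $2$-subset fails, $\dim_E(Y_n)\ge 3$.

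The principal difficulty is thus to organise the lower-bound case analysis so that it is genuinely exhaustive: the group $D_n\times\langle\sigma\rangle$ has only $O(1)$ orbits of $2$-subsets, but for most of them no nontrivial automorphism fixes both landmarks, so the colliding pair has to be produced by hand from the cyclic-distance formula rather than conjured from symmetry. I would isolate this as a lemma asserting that any two edges leave some pair near the antipodal cycle unresolved, dispose of the small values $n=3,4$ (where $Y_4\cong Q_3$ carries extra symmetry) by direct inspection, and then combine the two bounds to conclude $\dim_E(Y_n)=3$ for all $n\ge 3$.
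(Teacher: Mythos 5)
You should know at the outset that the paper itself contains no proof of this statement: Theorem \ref{t1} is imported from \cite{NZZ}, and the present paper uses it only to conclude $\psi_E(Y_n)\ge\dim_E(Y_n)=3$. So your proposal cannot be compared with "the paper's proof" and must stand on its own. Judged that way, its upper-bound half is sound. The identity $d_E(e,f)=1+\min\{d(p,q):p\in e,\ q\in f\}$ is correct for distinct edges, your representation formulas for $W_E=\{a_1,b_1,c_1\}$ follow from it, the invariant "second coordinate minus first" equals $+1$ on inner edges, $-1$ on outer edges and $0$ on spokes (and $+2$, $-2$ on the landmarks $a_1,b_1$, with $c_1$ the unique edge whose third coordinate is $0$), and the within-type injectivity of $(m_i,p_i)$ and of $(s_i,\delta(i,1))$ does hold, including at the antipodal cases you flag. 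So $\dim_E(Y_n)\le 3$ is genuinely established by your outline.

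The genuine gap is the lower bound, which is the substance of the theorem. Outside the two-spoke case (where the $\sigma$-argument is complete and clean), you never actually produce the colliding pairs, and the mechanism you propose for producing them fails in general. For two inner landmarks you predict a collision between two outer edges; this is right for $\{a_1,a_2\}$, but already for $W=\{a_1,a_3\}$ in $Y_6$ the six outer edges have the pairwise distinct representations $(2,3),(2,2),(3,2),(4,2),(3,3),(2,4)$, so no outer--outer collision exists there. The collisions that do exist are of mixed type and sit next to the landmarks --- for instance $a_6$ and $c_1$ both have representation $(1,3)$, and $a_4$ and $c_4$ both have $(3,1)$ --- which also contradicts your proposed organizing lemma that an unresolved pair can always be found "near the antipodal cycle." In addition, your claim that $D_n\times\langle\sigma\rangle$ has $O(1)$ orbits of $2$-subsets is false: the separation between the two landmarks is an orbit invariant, so there are $\Theta(n)$ orbits, organized into a bounded number of one-parameter families (inner--inner, inner--outer, inner--spoke, and their $\sigma$-images, indexed by the separation $t$ and the parity of $n$). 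Consequently the lower bound is not a finite check: for each family and each $t$ you must exhibit an explicit colliding pair as a function of $t$, and none of that work is done. As it stands, the proposal proves $\dim_E(Y_n)\le 3$ together with only fragments of $\dim_E(Y_n)\ge 3$.
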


In this article, we proposed minimal edge version of doubly resolving sets of a graph $G$, based on edge distances of graph $G$ as follows:
\begin{definitions}
\begin{enumerate}
\item  The edges $f$ and $g$ of the graph $G$ with size $|E(G)|\geq 2$ are supposed to edge doubly resolve edges $f_{1}$ and $f_{2}$ of the graph $G$ if $d_E(f_{1}, f)-d_E(f_{1}, g)\neq d_E(f_{2}, f)-d_E(f_{2}, g)$.
\item    Let $D_E = \{f_1, f_2,\ldots, f_k\}$ be an ordered set of the edges of $G$ then if any two edges $e\neq f\in E(G)$ are edge doubly resolved by some two edges of set $D_E$ then the set $D_E\subseteq E(G)$ is said to be an edge version of doubly resolving set of $G$. The minimum cardinality of an edge version of doubly resolving set of $G$ is represented by $\psi_E (G).$
\end{enumerate}
\end{definitions}

Note that every edge version of doubly resolving set is an edge version of resolving set, which implies $\dim_E(G)\leq \psi_E(G)$ for all graphs $G$.
\section{The edge version of doubly resolving sets for family of $n$-sunlet graph $S_{n}.$}
The family of $n$-sunlet graph $S_{n}$ is obtained by joining $n$ pendant edges to a cycle graph $C_n$ (see Figure \ref{sfig1}).\\

\begin{center}
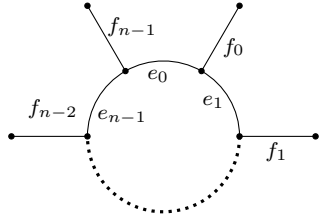

\begin{tikzpicture}[rotate=60]

\draw  (60:1cm) arc[radius=1cm, start angle=60, end angle= 0];

 \filldraw[black] (60:2cm) circle(1pt);
\filldraw[black] (60:1cm) circle(1pt);
 \filldraw[black] (0:2cm) circle(1pt);
\filldraw[black] (0:1cm) circle(1pt);
\draw (0:1cm) -- (0:2cm);
\draw (60:1cm) -- (60:2cm);
  \draw (46:1.5cm)node {\tiny$f_{n-1} $};
  \draw (35:0.8cm)node {\tiny$ e_0$};
  \draw (-8:1.5cm)node {\tiny$f_{0} $};
  \draw (-25:0.8cm)node {\tiny$ e_1$};
   \draw (-68:1.5cm)node {\tiny$f_{1} $};
   \draw (106:1.5cm)node {\tiny$f_{n-2} $};
     \draw (95:0.6cm)node {\tiny$ e_{n-1}$};

\draw  (0:1cm) arc[radius=1cm, start angle=0, end angle= -60];

 \filldraw[black] (-60:2cm) circle(1pt);
\filldraw[black] (-60:1cm) circle(1pt);
 \filldraw[black] (120:2cm) circle(1pt);
\filldraw[black] (120:1cm) circle(1pt);

\draw [very thick,dotted, black] (-60:1cm) arc[radius=1cm, start angle=-60, end angle= -240];

 \draw [black]  (-240:1cm) arc[radius=1cm, start angle=-240, end angle= -300];
\draw [black] (-60:1cm) -- (-60:2cm);
\draw (-240:1cm) -- (-240:2cm);

\end{tikzpicture}\captionof{figure}{$n$-sunlet graph $S_n$ }\label{sfig1}
\end{center}

For our purpose, we label the inner edges of $S_{n}$ by \{$e_i$ : $\forall$ $0\leq i\leq n-1$\} and the pendent edges by \{$f_i$ : $\forall$ $0\leq i\leq n-1$\} as shown in Figure \ref{sfig1}.\\

\begin{center}
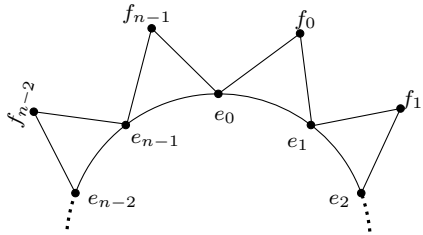

 \begin{tikzpicture}[scale=2]
 \draw  (20:1cm) arc[radius=1cm, start angle=20, end angle= 160];
 \draw [very thick,dotted]  (160:1cm) arc[radius=1cm, start angle=160, end angle=175];
\draw [very thick,dotted]  (5:1cm) arc[radius=1cm, start angle=5, end angle=20];
\foreach \x in {20,52.5,90,127.5,160} {
                \filldraw[black] (\x:1cm) circle(0.7pt);}
                \draw (20:1cm) -- (37:1.5cm);
\draw (52:1cm) -- (37:1.5cm);
\draw (52:1cm) -- (69:1.5cm);
\draw (90:1cm) -- (69:1.5cm);
\draw (90:1cm) -- (107:1.5cm);
\draw (127:1cm) -- (107:1.5cm);
\draw (127:1cm) -- (144:1.5cm);
\draw (160:1cm) -- (144:1.5cm);
\draw (52:.85cm)node {\tiny$e_{1}$};
\draw (20:.85cm)node {\tiny$e_{2}$};
\draw (121:.8cm)node {\tiny$e_{n-1}$};
\draw (158:.75cm)node {\tiny$e_{n-2}$};
\draw (87:.85cm)node {\tiny$e_{0}$};
\draw (37:1.6cm)node {\tiny$f_{1}$};
\draw (107:1.6cm)node {\tiny$f_{n-1}$};
\draw (144:1.6cm)node {\rotatebox{75}{\tiny$f_{n-2}$}};
\draw (69:1.6cm)node {\tiny$f_{0}$};
\foreach \x in {37,69,107,144} {
                \filldraw[black] (\x:1.5cm) circle(0.7pt);}

 \end{tikzpicture}
  \captionof{figure}{$L(S_{n})$ of $n$-sunlet graph $S_{n}$}\label{sfig2}
 \end{center}

 As motivated by the Theorem \ref{t2}, we obtain $$\,\,\ \psi_E (S_{n})\geq  \left\{ \begin{array}{ll}

2,             & {\rm if}\ n\text{ is even};   \\[2mm]

3 ,     & {\rm if}\  n \text{ is odd.}     \\[2mm]

\end{array} \right.$$  Furthermore, we will show that $\psi_E (S_{n})=3$ for $n\geq4.$  \\

In order to calculate the edge distances for family of $n$-sunlet graphs $S_n$, consider the line graph $L(S_n)$ as shown in Figure \ref{sfig2}.\\
Define $S_{i}(e_{0})=\{e\in E(S_{n}):d(e_{0}, e)= i\}$. For $\psi_E (S_{n})$ with $n\geq 4,$ we can locate the sets $S_{i}(e_{0})$ that are represented in the Table \ref{stb1}. It is clearly observed from Figure \ref{sfig2} that $S_{i}(e_{0})= \emptyset$ when $i\geq k+1$ for $n=2k,$ and $S_{i}(e_{0})= \emptyset$ when $i\geq k+2$ for $n=2k+1$. From the above mentioned sets $S_{i}(e_{0}),$  it is clear that they can be utilized to define the edge distances between two arbitrary edges of $E(S_{n})$ in the subsequent way.

\begin{table}[h]
\centering
\caption{$S_{i}(e_{0})$ for $S_{n}$}\label{stb1}
\vspace{1ex}
\begin{tabular}{|l|l|l|}\hline
$n$ & $i$ & $S_{i}(e_{0})$\\ \hline
  & $1 \leq i\leq k$ & \{$f_{i-1}$, $e_{i}$, $f_{n-i}$, $e_{n-i}$\}\\ \hline
$2k (k\geq 2)$ & $k$ & $\{f_{k-1}, f_{k}, e_{k}\}$\\ \hline
$2k+1 (k\geq 2)$ & $k$ & $\{f_{k-1}, e_{k}, f_{k+1}, e_{k+1}\}$\\ \hline
                & $k+1$ & $\{f_{k}\}$\\ \hline
\end{tabular}
\end{table}
\vspace{1ex}
The symmetry in Figure \ref{sfig2} shows that $ d_E(e_{i}, e_{j})= d_E(e_{0}, e_{|j-i|})$ for $0 \leq |j-i|\leq n-1.$ If $n= 2k$, where $k\geq 2,$ we have

$\,\,\ d_E(f_{i}, f_{j})= \left\{ \begin{array}{ll}

d_E(e_{0}, f_{|j-i|})-1,  & {\rm if}\  |j-i|= 0;     \\[2mm]
d_E(e_{0}, f_{|j-i|}),    & {\rm if}\  1\leq |j-i|< k;     \\[2mm]
d_E(e_{0}, f_{|j-i|})+1,    & {\rm if}\  k\leq |j-i|\leq n-1,     \\[2mm]

\end{array} \right.$\\
\vspace{2ex}

$\,\,\ d_E(e_{i}, f_{j})= \left\{ \begin{array}{ll}

d_E(e_{0}, f_{|j-i|}),    & {\rm if}\  0\leq |j-i|\leq n-1 \text{ for }  i\leq j;     \\[2mm]
d_E(e_{0}, f_{|j-i|})-1,    & {\rm if}\  1\leq |j-i|< k  \text{ for }  i> j  ;     \\[2mm]
d_E(e_{0}, f_{|j-i|}),  & {\rm if}\  |j-i|= k  \text{ for }  i> j ;     \\[2mm]
d_E(e_{0}, f_{|j-i|})+1,    & {\rm if}\  k < |j-i|\leq n-1  \text{ for }  i> j.    \\[2mm]

\end{array} \right.$\\
\vspace{2ex}

If $n=2k+1$ where $k\geq 2,$ we have\\

$\,\,\ d_E(f_{i}, f_{j})= \left\{ \begin{array}{ll}

d_E(e_{0}, f_{|j-i|})-1,    & {\rm if}\  |j-i|=0;     \\[2mm]
d_E(e_{0}, f_{|j-i|}),   & {\rm if}\  1\leq |j-i|\leq k;     \\[2mm]
d_E(e_{0}, f_{|j-i|})+1,    & {\rm if}\  k< |j-i|\leq n-1,     \\[2mm]

\end{array} \right.$\\
\vspace{2ex}

$\,\,\ d_E(e_{i}, f_{j})= \left\{ \begin{array}{ll}

d_E(e_{0}, f_{|j-i|}),    & {\rm if}\  0\leq |j-i|\leq n-1  \text{ for }   i\leq j;     \\[2mm]
d_E(e_{0}, f_{|j-i|})-1,    & {\rm if}\  1\leq |j-i|\leq k  \text{ for }   i> j  ;     \\[2mm]
d_E(e_{0}, f_{|j-i|})+1,    & {\rm if}\  k < |j-i|\leq n-1  \text{ for }   i> j.     \\[2mm]

\end{array} \right.$\\

As a result, if we know the edge distance $d_E(e_{0},e)$ for any $ e\in E(S_{n})$, then one can recreate the edge distances between any two edges from $E(S_n).$

\begin{lemma}\label{l1}
$ \psi_E(S_{n})> 2$, for $n=2k$, $k\geq 2.$
\end{lemma}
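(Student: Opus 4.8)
The plan is to recast the defining condition as an injectivity statement and then defeat it by a counting (pigeonhole) argument. Suppose, for contradiction, that some $2$-element set $D_E=\{f,g\}\subseteq E(S_n)$ is an edge version of doubly resolving set. For each edge $e\in E(S_n)$ put $\delta(e)=d_E(e,f)-d_E(e,g)$. By the definition of edge doubly resolving, the pair $f,g$ edge doubly resolves $e\neq e'$ exactly when $\delta(e)\neq\delta(e')$; since the only two edges available in $D_E$ are $f$ and $g$ themselves, $D_E$ is an edge version of doubly resolving set if and only if the map $\delta\colon E(S_n)\to\mathbb{Z}$ is injective. As $|E(S_n)|=2n$ (the $n$ inner edges together with the $n$ pendant edges), injectivity of $\delta$ forces its image to contain at least $2n$ distinct integers.

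Next I would bound the size of that image. By the triangle inequality in $L(S_n)$ one has $|d_E(e,f)-d_E(e,g)|\le d_E(f,g)$ for every edge $e$, so $\delta(e)$ lies in $\{-d_E(f,g),\,-d_E(f,g)+1,\dots,\,d_E(f,g)\}$, a set of exactly $2\,d_E(f,g)+1$ integers. Hence $\delta$ takes at most $2\,d_E(f,g)+1\le 2\,\mathrm{diam}\big(L(S_n)\big)+1$ distinct values, the distance between $f$ and $g$ being bounded above by the diameter of the line graph.

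The remaining ingredient is the diameter of $L(S_n)$ for $n=2k$. From Table \ref{stb1} every edge lies within distance $k$ of $e_0$ (the sets $S_i(e_0)$ vanish for $i\ge k+1$), so $\mathrm{ecc}(e_0)=k$, and by the rotational symmetry of Figure \ref{sfig2} the same holds for every inner edge. The stated distance formulas then show that the only way to exceed $k$ is to join two ``antipodal'' pendant edges, for which $d_E(f_i,f_{i+k})=d_E(e_0,f_k)+1=k+1$; therefore $\mathrm{diam}\big(L(S_n)\big)=k+1$. This diameter computation, carried out directly from the given formulas for $d_E(f_i,f_j)$ and $d_E(e_i,f_j)$, is the one genuinely calculational step, and it is where I expect the bookkeeping to be heaviest.

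Combining the two estimates, $\delta$ can take at most $2(k+1)+1=2k+3=n+3$ distinct values, whereas injectivity demands at least $2n$. For $n=2k$ with $k\ge 2$ we have $2n=4k>2k+3=n+3$, a contradiction. Hence no $2$-element set can be an edge version of doubly resolving set, and $\psi_E(S_n)>2$ for $n=2k$, $k\ge 2$. Conceptually the crux is that doubly resolving only constrains the one-dimensional difference $\delta$, whose range is pinned down by the (small) diameter, in contrast to ordinary edge resolving where the full pair of coordinates $\big(d_E(e,f),d_E(e,g)\big)$ is available; this is precisely what opens the gap between $\dim_E(S_n)=2$ and $\psi_E(S_n)=3$ in the even case.
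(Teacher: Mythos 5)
Your proof is correct, and it takes a genuinely different route from the paper. The paper argues by exhaustive case analysis: it lists seven possible types of $2$-element subsets $D_E\subseteq E(S_n)$ (up to the rotational symmetry of $S_n$) and exhibits, for each type, an explicit pair of edges that is not edge doubly resolved. Your argument instead observes that for $|D_E|=\{f,g\}$ the doubly resolving condition is exactly injectivity of the single difference function $\delta(e)=d_E(e,f)-d_E(e,g)$, bounds its range by $2\,d_E(f,g)+1\le 2\,\mathrm{diam}(L(S_n))+1$ via the triangle inequality, and then wins by pigeonhole since $|E(S_n)|=4k$. All the ingredients check out: $\delta$-injectivity is indeed equivalent to the definition when $|D_E|=2$; the adjacencies in $L(S_n)$ (each $f_i$ adjacent only to $e_i$ and $e_{i+1}$, the $e_i$ forming a $C_n$) give $\mathrm{diam}(L(S_n))=k+1$, attained by antipodal pendant pairs, exactly as the paper's distance formulas confirm ($d_E(f_i,f_{i+k})=d_E(e_0,f_k)+1=k+1$); and $4k>2k+3$ for $k\ge2$. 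Your approach buys conceptual clarity and generality --- it really proves that $\psi_E(G)>2$ for \emph{any} graph with $|E(G)|>2\,\mathrm{diam}(L(G))+1$, with no case analysis --- whereas the paper's method, though tedious, produces explicit non-resolved witness pairs for each candidate set. One point worth flagging: for $k=2$ your count is tight ($8$ edges against at most $2\cdot3+1=7$ values), so the exact diameter $k+1$ (rather than, say, a bound of $k+2$) is genuinely needed there; you computed it correctly, but that step carries the whole argument in the smallest case.
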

\begin{proof}
As we know that for $n =2k$, $\psi_E(S_{n})\geq 2.$ So it is necessary to prove that each of the subset $D_E$ of edge set $E(S_{n})$ such that $|D_E|=2$ is not an edge version of doubly resolving set for $S_{n}.$ In Table \ref{stb2}, seven possible types of set $D_E$ are presented and for each of them the resultant non-edge doubly resolved pair of edges from edge set $E(S_{n})$ is found. To verify, let us take an example, the edges $ e_{k}, e_{k+1} $ are not edge doubly resolved by any two edges of the set $\{e_{0}, e_{i}; k< i \leq n-1\}$.
Obviously, for $ k< i\leq n-1 $, we have\\
$d_E(e_{0}, e_{k})= d_E(e_{0}, e_{|k-0|})= k$, $d_E(e_{0}, e_{k+1})= d_E(e_{0}, e_{|k+1-0|})= k-1$, $d_E(e_{i}, e_{k})= d_E(e_{0}, e_{|k-i|})= i-k$ and
$d_E(e_{i}, e_{k+1})=d_E(e_{0}, e_{|k+1-i|})=i-k-1$. So, $d_E(e_{0}, e_{k})- d_E(e_{0}, e_{k+1})= d_E(e_{i}, e_{k})- d_E(e_{i}, e_{k+1})= 1,$ that is,
$\{e_{0}, e_{i}; k< i \leq n-1\}$ is not an edge version of doubly resolving set of $S_{n}$. Using this procedure we can verify all other non-edge doubly resolved pairs of edges for all other possible types of $D_E$ from Table \ref{stb2}.

\begin{table}[h]
\centering
\caption{Non-edge doubly resolved pairs of $S_{n}$ for $n=2k,$ $k\geq 2$}\label{stb2}
\vspace{1ex}
\begin{tabular}{|l|l|}\hline
$D_E$ & Non-edge doubly resolved pairs \\ \hline
$\{e_{0}, e_{i}\}$, $0 < i < k$  & $\{e_{0}, e_{n-1}\}$\\ \hline
$\{e_{0}, e_{i}\}$, $k < i \leq n-1$  & $\{e_{k}, e_{k+1}\}$\\ \hline
$\{e_{0}, f_{i}\}$, $0 \leq i < k$  & $\{e_{0}, f_{n-1}\}$\\ \hline
$\{e_{0}, f_{i}\}$, $k \leq i \leq n-1$  & $\{e_{0}, f_{0}\}$\\ \hline
$\{f_{0}, f_{i}\}$, $1 \leq i < k$  & $\{e_{k}, f_{k}\}$\\ \hline
$\{f_{0}, f_{k}\}$    & $\{e_{0}, e_{1}\}$\\ \hline
$\{f_{0}, f_{i}\}$, $k < i \leq n-1$  & $\{e_{1}, f_{1}\}$\\ \hline
\end{tabular}
\end{table}
\end{proof}
\begin{lemma}\label{sl2}
$\psi_E(S_{n})=3,$ for $n=2k$, $k\geq 2.$
\end{lemma}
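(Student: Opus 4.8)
The lower bound $\psi_E(S_n) \ge 3$ is already in hand from Lemma \ref{l1}, so the entire task reduces to producing a matching upper bound: I need only exhibit one three-element set $D_E \subseteq E(S_n)$ and verify it is an edge version of doubly resolving set. The plan is to pick three edges that break the dihedral symmetry of the line graph $L(S_n)$. Since $\dim_E(S_n) = 2$ for even $n$, any two edges leave some pair unresolved (exactly the coincidences catalogued in Table \ref{stb2}), so the third edge must be chosen to destroy every such coincidence simultaneously. A natural candidate to try is $D_E = \{e_0, e_1, f_0\}$ or a nearby variant; I would commit to the precise triple only after the computation below confirms it works.

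The verification rests on the following reformulation. Writing $r_E(e, D_E) = (d_E(e, g_1), d_E(e, g_2), d_E(e, g_3))$ for $D_E = \{g_1, g_2, g_3\}$, the set $D_E$ fails to edge doubly resolve a pair $e \neq f$ exactly when $r_E(e, D_E) - r_E(f, D_E)$ is a constant vector $(c, c, c)$. Hence, after normalizing each representation by subtracting its first coordinate, i.e.\ passing to $\widehat{r}_E(e) = (d_E(e, g_2) - d_E(e, g_1),\, d_E(e, g_3) - d_E(e, g_1))$, the statement ``$D_E$ is an edge version of doubly resolving set'' becomes simply ``the vectors $\widehat{r}_E(e)$ are pairwise distinct over all $e \in E(S_n)$.'' This converts a quantifier over pairs into a single injectivity check on a table.

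Concretely, I would use the edge-distance formulas displayed above together with the sphere data $S_i(e_0)$ from Table \ref{stb1} to write $d_E(e, g)$, for each $g \in D_E$, as an explicit piecewise-linear function of the index of $e$, handling separately the two families $\{e_i\}_{0 \le i \le n-1}$ and $\{f_i\}_{0 \le i \le n-1}$ and the index ranges where the formulas switch (around $i = k$). Assembling these into the normalized vectors $\widehat{r}_E$, I then check injectivity. Because each coordinate of $\widehat{r}_E$ is monotone on each range apart from the reflection $i \mapsto n-i$, distinctness within a single family and range is immediate; the real content lies in ruling out collisions across the break points and between the $e$- and $f$-families.

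The step I expect to be the main obstacle is precisely this cross-checking: the reflection symmetry of $S_n$ forces many edges to be equidistant from any symmetric subset, so I must confirm that the asymmetric third edge $f_0$ (or whatever is selected) genuinely separates each residual symmetric pair\,---\,in particular the pairs $\{e_0, e_{n-1}\}$, $\{e_k, e_{k+1}\}$, $\{e_0, f_0\}$, and the others that Table \ref{stb2} shows two edges cannot handle. Once the normalized table is laid out, this becomes a finite, if tedious, inspection of the remaining ranges. I would present it as a representation table in the style of Table \ref{stb2}, exhibiting for each potentially colliding pair two members of $D_E$ that edge doubly resolve it, thereby establishing $\psi_E(S_n) \le 3$ and completing the proof.
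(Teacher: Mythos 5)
Your overall strategy (take the lower bound from Lemma \ref{l1}, then exhibit one explicit three-edge set and verify it via the distance formulas and the spheres $S_i(e_0)$) is the same as the paper's, and your reformulation is sound: $D_E$ is an edge version of doubly resolving set iff no two edges have representations differing by a constant vector, i.e.\ iff the normalized vectors are pairwise distinct. The genuine gap is that you never commit to a triple and never run the check, and the one concrete candidate you do name, $D_E=\{e_0,e_1,f_0\}$, fails. In $L(S_n)$ the vertex $f_0$ has degree $2$, and \emph{both} of its neighbours, $e_0$ and $e_1$, already lie in your set; hence for every edge $x\neq f_0$,
\[
d_E(f_0,x)=1+\min\bigl(d_E(e_0,x),\,d_E(e_1,x)\bigr).
\]
So if two edges $x,y\neq f_0$ satisfy $d_E(e_0,x)-d_E(e_0,y)=d_E(e_1,x)-d_E(e_1,y)=c$, then automatically $d_E(f_0,x)-d_E(f_0,y)=c$ as well: the third coordinate is a function of the first two and can never break a coincidence. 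Since $\{e_0,e_1\}$ fails to doubly resolve the pair $\{e_0,e_{n-1}\}$ (first row of Table \ref{stb2} with $i=1$), so does $\{e_0,e_1,f_0\}$; concretely $r_E(e_0,D_E)=(0,1,1)$ and $r_E(e_{n-1},D_E)=(1,2,2)$, with constant difference $(1,1,1)$. The same computation kills the nearby variant $\{e_0,e_1,f_1\}$ once $k\geq 3$.

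The missing idea is that the third edge must be chosen far from the first two, so that its distance function is not determined by theirs; the paper takes $D_E^{\ast}=\{e_0,e_1,e_k\}$, with $e_k$ antipodal to $e_0$ on the inner cycle, and then organizes the verification by the sphere decomposition: every edge in $S_i(e_0)$ has first coordinate $i$, so within one sphere the only possible constant difference is $0$, across spheres $S_i(e_0)$ and $S_j(e_0)$ it is $i-j$, and Table \ref{stb3} rules out both. Until you fix a triple that survives the degeneracy above and actually carry out this (finite) inspection, the upper bound $\psi_E(S_n)\leq 3$ --- which is the entire content of the lemma beyond Lemma \ref{l1} --- has not been established.
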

\begin{proof}
The Table \ref{stb3} demonstrate that edge version of representations of $S_{n}$ in relation to the set $D_E^{\ast} = \{e_{0}, e_{1}, e_{k}\}$ in a different manner.\\

\begin{table}[h!]
\centering
\caption{Vectors of edge metric coordinates for $S_{n},$ $n=2k,$ $k\geq 2$}\label{stb3}
\vspace{1ex}
\begin{tabular}{|l|l|l|}\hline
$i$ & $ S_{i}(e_{0})$ & $D_E^{\ast}= \{{e_{0}, e_{1}, e_{k}}\}$ \\ \hline
$0$ & $e_{0}$  & $(0, 1, k)$\\ \hline
$1\leq i< k$  & $f_{i-1}$  & $(i, i-1, k+1-i)$\\
    & $e_{i}$  & $(i, i-1, k-i)$\\
    & $f_{n-i}$  & $(i, i+1, k+1-i)$\\
    & $e_{n-i}$ & $(i, i+1, k-i)$\\ \hline
$i=k$ & $f_{k-1}$  & $(k, k-1, 1)$\\
      & $f_{k}$  & $(k, k, 1)$\\
      & $e_{k}$  & $(k, k-1, 0)$\\ \hline
\end{tabular}
\end{table}
\vspace{1ex}
Now from Table \ref{stb3}, as $e_{0} \in D_E^{\ast}$, so the first edge version of metric coordinate of the vector of $e_{0}\in S_{i}(e_{0})$ is equal to $0$. For each $i\in \{1,2,3,\ldots,k\}$, one can easily check that there are no two edges $h_1, h_2 \in S_{i}(e_{0})$ such that $ r_E(h_1, D_E^{\ast})- r_E(h_2, D_E^{\ast})=0$. Also, for each $i, j\in \{1,2,3,\ldots,k\}, i\neq j$, there are no two edges $h_1 \in S_{i}(e_{0})$ and $h_2 \in S_{j}(e_{0})$ such that $ r_E(h_1,D_E^{\ast})- r_E(h_2,D_E^{\ast})=i- j$. In this manner, the set $D_E^{\ast}= \{{e_{0}, e_{1}, e_{k}}\}$ is the minimal edge version of doubly resolving set for $S_{n}$ with $n=2k$, $k\geq 2$ and hence Lemma \ref{sl2} holds.
\end{proof}
\vspace{1ex}
\begin{lemma}\label{sl3}
$\psi_E(S_{n})=3$, for $n=2k+1$, $k\geq 2.$
\end{lemma}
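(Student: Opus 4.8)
The lower bound $\psi_E(S_n)\ge 3$ is immediate: Theorem \ref{t2} gives $\dim_E(S_n)=3$ for odd $n$, and since every edge version of doubly resolving set is an edge version of resolving set we have $\dim_E(S_n)\le\psi_E(S_n)$. Thus, in contrast with the even case, no analogue of Lemma \ref{l1} is needed to exclude two-element sets, and it suffices to exhibit a single edge version of doubly resolving set of cardinality $3$. The plan is to imitate the proof of Lemma \ref{sl2}: fix a three-edge candidate $D_E^{\ast}$, tabulate $r_E(e,D_E^{\ast})$ for every $e\in E(S_n)$ from the odd-case edge-distance formulas recorded above together with the layers $S_i(e_0)$ of Table \ref{stb1}, and then test the doubly resolving property layer by layer.

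A caveat must be noted at the outset: the even-case choice $\{e_0,e_1,e_k\}$ does \emph{not} transfer. For odd $n$ one checks that $r_E(e_0,\{e_0,e_1,e_k\})-r_E(f_{n-1},\{e_0,e_1,e_k\})$ and $r_E(e_k,\{e_0,e_1,e_k\})-r_E(f_k,\{e_0,e_1,e_k\})$ are both constant vectors, so $\{e_0,e_1,e_k\}$ fails to doubly resolve. The remedy is to advance the far reference edge one step around the cycle, and I would take $D_E^{\ast}=\{e_0,e_1,e_{k+1}\}$. Using $d_E(e,e_0)=i$ for $e\in S_i(e_0)$ and the listed formulas for $d_E(e_i,e_j)$, $d_E(e_i,f_j)$, $d_E(f_i,f_j)$, one produces the analogue of Table \ref{stb3}, in which each generic layer $S_i(e_0)=\{f_{i-1},e_i,f_{n-i},e_{n-i}\}$ ($1\le i\le k$) supplies four vectors with first coordinate $i$, together with the singleton layers $S_0(e_0)=\{e_0\}$ and $S_{k+1}(e_0)=\{f_k\}$.

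For the verification I would use the reformulation that $D_E^{\ast}$ is an edge version of doubly resolving set precisely when, for all distinct edges $h_1,h_2$, the vector $r_E(h_1,D_E^{\ast})-r_E(h_2,D_E^{\ast})$ is not constant. As $e_0\in D_E^{\ast}$, this difference has first coordinate $i-j$ whenever $h_1\in S_i(e_0)$ and $h_2\in S_j(e_0)$, which splits the check into two cases. For $i=j$ I must show the four vectors of one layer are pairwise distinct; here the $e_1$-coordinate separates the pair $\{f_{i-1},e_i\}$ from $\{f_{n-i},e_{n-i}\}$, while the $e_{k+1}$-coordinate is larger by one on each pendant than on its paired cycle edge. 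For $i\neq j$ I must exclude the difference being $(i-j,i-j,i-j)$; in the even case this was automatic because the sum $d_E(e,e_0)+d_E(e,e_k)$ takes only two values, one per edge type, since $e_k$ is the exact antipode of $e_0$, but for odd $n$ no cycle edge is antipodal to $e_0$ and this shortcut is unavailable, so the cross-layer comparison must be read off directly from the table, which confirms that no constant difference occurs.

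The step I expect to be the real obstacle is exactly this parity asymmetry. Choosing the far reference so that the representations stay injective, and explaining why $e_{k+1}$ succeeds where $e_k$ fails, is the conceptual heart of the argument; the clean antipodal invariant of Lemma \ref{sl2} is lost and must be replaced by a more careful, case-by-case reading of the table. The secondary difficulty is bookkeeping at the two layers peculiar to the odd case --- the full four-edge layer $S_k(e_0)=\{f_{k-1},e_k,f_{k+1},e_{k+1}\}$, which contains the reference edge $e_{k+1}$ itself, and the extra singleton $S_{k+1}(e_0)=\{f_k\}$ --- whose vectors I would compute individually and compare against every other layer. Once these comparisons are settled the table yields $\psi_E(S_n)\le 3$, and combined with the lower bound this gives $\psi_E(S_n)=3$ for $n=2k+1$, $k\ge 2$.
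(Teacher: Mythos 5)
Your proposal is correct and follows essentially the same route as the paper: the lower bound via Theorem \ref{t2} and $\dim_E(S_n)\le\psi_E(S_n)$, the very same candidate set $D_E^{\ast}=\{e_0,e_1,e_{k+1}\}$, and the same layer-by-layer verification against the sets $S_i(e_0)$ (the paper's Table \ref{stb4} is exactly the table you describe). Your additional observation that $\{e_0,e_1,e_k\}$ fails for odd $n$ (e.g.\ $r_E(e_0)-r_E(f_{n-1})=(-1,-1,-1)$) is accurate and is a useful motivation the paper omits, but it does not change the substance of the argument.
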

\begin{proof}
The Table \ref{stb4} demonstrate that the edge version of representations of $S_{n}$ in relation to the set $D_E^{\ast} = \{e_{0}, e_{1}, e_{k+1}\}$ in a different way.\\

\begin{table}
\centering
\caption{Vectors of edge metric coordinates for $S_{n},$ $n=2k+1,$ $k\geq 2$}\label{stb4}
\vspace{1ex}
\begin{tabular}{|l|l|l|}\hline
$i$ & $ S_{i}(e_{0})$ & $D_E^{\ast}= \{{e_{0}, e_{1}, e_{k+1}}\}$ \\ \hline
$0$ & $e_{0}$  & $(o, 1, k)$\\ \hline
$1\leq i< k$ & $f_{i-1}$ & $(i, i-1, k+2-i)$\\
    & $e_{i}$ & $(i, i-1, k+1-i)$\\
    & $f_{n-i}$ & $(i, i+1, k+1-i)$\\
    & $e_{n-1}$  & $(i, i+1, k-i)$\\ \hline
$i=k$ & $f_{k-1}$  & $(k, k-1, 2)$\\
        & $e_{k}$  & $(k, k-1, 1)$\\
        & $f_{k+1}$  & $(k, k+1, 1)$\\
        & $e_{k+1}$  & $(k, k, 0)$\\ \hline
$i=k+1$ & $f_{k}$  & $(k+1, k, 1)$\\ \hline
\end{tabular}
\end{table}
\vspace{1ex}
Now from Table \ref{stb4}, as $e_{0} \in D_E^{\ast}$, so the first edge version of metric coordinate of the vector of $e_{0}\in S_{i}(e_{0})$ is equal to $0$. Similarly for each $i\in \{1,2,3,\ldots,k+1\}$, one can easily find that there are no two edges $h_1, h_2 \in S_{i}(e_{0})$ such that $ r_E(h_1,D_E^{\ast})- r_E(h_2,D_E^{\ast})=0$. Likewise, for every $i, j\in \{1,2,3,\ldots,k+1\}, i\neq j$, there are no two edges $h_1 \in S_{i}(e_{0})$ and $h_2 \in S_{j}(e_{0})$ such that  $r_E(h_1,D_E^{\ast})- r_E(h_2,D_E^{\ast})=i- j$. Like so, the set $D_E^{\ast}= \{{e_{0}, e_{1}, e_{k+1}}\}$ is the minimal edge version of doubly resolving set for $S_{n}$ with $n=2k+1$, $k\geq 2$ and consequently Lemma \ref{sl3} holds.\\
\end{proof}
It is displayed from the whole technique that $\psi_E(S_{n})=3$, for $n\geq 4$. We state the resulting main theorem by using Lemma \ref{sl2} and Lemma \ref{sl3} as mentioned below;\\

\begin{thm}
Let $S_{n}$ be the $n$-sunlet graph for $n\geq 4.$ Then $\psi_E(S_{n})=3.$
\end{thm}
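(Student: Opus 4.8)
The plan is to establish the two bounds $\psi_E(S_n) \ge 3$ and $\psi_E(S_n) \le 3$ and combine them, handling the two parities of $n$ separately. This split is natural because the line graph $L(S_n)$, and therefore the edge-distance formulas recorded above, behave differently for $n = 2k$ and $n = 2k+1$. Since every integer $n \ge 4$ can be written uniquely as $n = 2k$ with $k \ge 2$ or as $n = 2k+1$ with $k \ge 2$, the two parity cases together exhaust the range $n \ge 4$, so it suffices to treat each case.

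For the lower bound the odd case is immediate: Theorem~\ref{t2} gives $\dim_E(S_n) = 3$, and since every edge version of doubly resolving set is an edge version of resolving set we have $\dim_E(S_n) \le \psi_E(S_n)$, whence $\psi_E(S_n) \ge 3$. The even case is the genuinely delicate one, because Theorem~\ref{t2} only supplies $\dim_E(S_n) = 2$ and the same inequality yields merely $\psi_E(S_n) \ge 2$. To improve this to $\psi_E(S_n) \ge 3$ I would appeal to Lemma~\ref{l1}: its proof reduces all two-element subsets $D_E \subseteq E(S_n)$ to finitely many symmetry types and, for each type, exhibits a concrete pair of edges that $D_E$ fails to doubly resolve, thereby ruling out $|D_E| = 2$.

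For the upper bound it is enough to display, in each parity, a single edge version of doubly resolving set of cardinality $3$. I would take exactly the sets provided by Lemma~\ref{sl2} and Lemma~\ref{sl3}, namely $D_E^{\ast} = \{e_0, e_1, e_k\}$ for $n = 2k$ and $D_E^{\ast} = \{e_0, e_1, e_{k+1}\}$ for $n = 2k+1$. The verification rests on the characterization that $D_E^{\ast}$ is edge doubly resolving precisely when, for every pair of distinct edges $h_1, h_2$, the difference $r_E(h_1, D_E^{\ast}) - r_E(h_2, D_E^{\ast})$ is not a constant vector. Because $e_0 \in D_E^{\ast}$, the first coordinate of $r_E(h, D_E^{\ast})$ is just the sphere index $i$ with $h \in S_i(e_0)$, so the check naturally organizes itself sphere by sphere: within one sphere $S_i(e_0)$ one confirms the difference is never zero, and across two spheres $S_i(e_0)$, $S_j(e_0)$ with $i \ne j$ one confirms the difference never equals the constant vector with all entries $i - j$. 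The precomputed coordinate tables make both checks routine case analyses.

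The main obstacle is therefore concentrated in the even-case lower bound, where Theorem~\ref{t2} is too weak and one must certify by hand that no two-element set works; the upper bound is mechanical once the candidate sets are guessed and the distance formulas are in place. Granting Lemmas~\ref{sl2} and~\ref{sl3}, the theorem itself then follows in one line: for $n \ge 4$ even, $\psi_E(S_n) = 3$ by Lemma~\ref{sl2}, and for $n \ge 4$ odd, $\psi_E(S_n) = 3$ by Lemma~\ref{sl3}.
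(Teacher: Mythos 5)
Your proposal is correct and follows essentially the same route as the paper: the lower bound comes from $\dim_E(S_n)\leq\psi_E(S_n)$ together with Theorem~\ref{t2} in the odd case and the exhaustive two-element-set exclusion of Lemma~\ref{l1} in the even case, while the upper bound comes from the explicit sets $\{e_0,e_1,e_k\}$ and $\{e_0,e_1,e_{k+1}\}$ verified sphere by sphere exactly as in Lemmas~\ref{sl2} and~\ref{sl3}. Your restatement of the verification criterion (the difference of representation vectors must never be a constant vector, checked within and across the spheres $S_i(e_0)$) is precisely the check the paper's tables carry out.
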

\section{The edge version of doubly resolving sets for family of prism graph $Y_{n}.$}
A family of prism graph $Y_n$ is cartesian product graph $C_n\times P_2$, where $C_n$ is cycle graph of order $n$ and $P_2$ is a path of order $2$ (see Figure \ref{fg1}).
 \begin{center}
 \begin{tikzpicture}[scale=1.8]
 \draw  (0:1.6cm) arc[radius=1.6cm, start angle=0, end angle= 180];
 \draw  (0:2.5cm) arc[radius=2.5cm, start angle=0, end angle= 180];
 \draw [very thick,dotted]  (180:1.6cm) arc[radius=1.6cm, start angle=180, end angle=185];
 \draw [very thick,dotted]  (355:1.6cm) arc[radius=1.6cm, start angle=355, end angle=360];
 \draw [very thick,dotted]  (180:2.5cm) arc[radius=2.5cm, start angle=180, end angle=185];
 \draw [very thick,dotted]  (355:2.5cm) arc[radius=2.5cm, start angle=355, end angle=360];
                \foreach \x in {3,37,69,107,144,177} {
                \filldraw[black] (\x:1.6cm) circle(0.7pt);}
                  \foreach \x in {3,37,69,107,144,177} {
                 \draw[black] (\x:1.6cm) -- (\x:2.5cm);}
                 \foreach \x in {3,37,69,107,144,177} {
                \filldraw[black] (\x:2.5cm) circle(0.7pt);}
\draw (52:1.5cm)node {\tiny$e_{1}$};
\draw (20:1.5cm)node {\tiny$e_{2}$};
\draw (121:1.4cm)node {\tiny$e_{n-1}$};
\draw (158:1.4cm)node {\tiny$e_{n-2}$};
\draw (87:1.5cm)node {\tiny$e_{0}$};
\draw (7:1.9cm)node {\tiny$f_{3}$};
\draw (34:1.9cm)node {\tiny$f_{2}$};
\draw (66:1.9cm)node {\tiny$f_{1}$};
\draw (102:1.9cm)node {\tiny$f_{0}$};
\draw (138:1.9cm)node {\rotatebox{71}{\tiny$f_{n-1}$}};
\draw (170:1.9cm)node {\rotatebox{75}{\tiny$f_{n-2}$}};
\draw (52:2.65cm)node {\tiny$g_{1}$};
\draw (20:2.65cm)node {\tiny$g_{2}$};
\draw (121:2.7cm)node {\tiny$g_{n-1}$};
\draw (158:2.7cm)node {\tiny$g_{n-2}$};
\draw (87:2.65cm)node {\tiny$g_{0}$};

\end{tikzpicture}
 
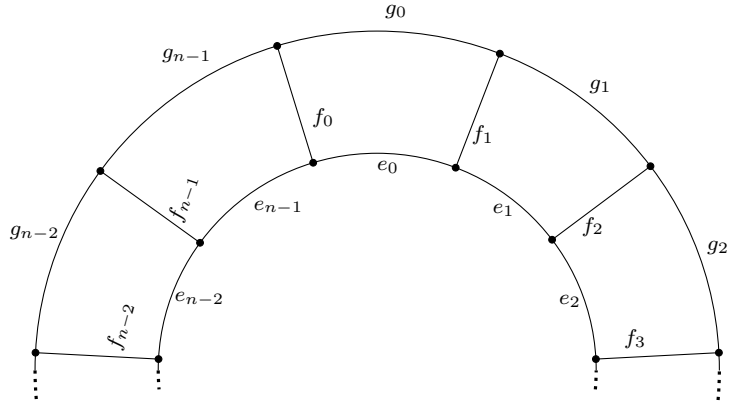
\captionof{figure}{Prism graph $Y_{n}$}\label{fg1}
 \end{center}
The family of prism graph $Y_{n}$ consists of $4$-sided faces and $n$-sided faces. For our purpose, we label the inner cycle edges of $Y_{n}$ by \{$e_i$ :  $0\leq i\leq n-1$\}, middle edges by \{$f_i$ : $0\leq i\leq n-1$\} and the outer cycle edges by \{$g_i$ :  $1\leq i\leq n-1$\} as shown in Figure \ref{fg1}.

 \begin{center}
 \begin{tikzpicture}[scale=2.2]
\draw  (20:1cm) arc[radius=1cm, start angle=20, end angle= 160];
\draw  (10:1.9cm) arc[radius=1.9cm, start angle=10, end angle= 170];
\draw [very thick,dotted]  (160:1cm) arc[radius=1cm, start angle=160, end angle=175];
\draw [very thick,dotted]  (5:1cm) arc[radius=1cm, start angle=5, end angle=20];
 \draw [very thick,dotted]  (170:1.9cm) arc[radius=1.9cm, start angle=170, end angle=180];
 \draw [very thick,dotted]  (0:1.9cm) arc[radius=1.9cm, start angle=0, end angle=10];
\foreach \x in {20,52.5,90,127.5,160} {
                \filldraw[black] (\x:1cm) circle(0.7pt);}
                \foreach \x in {10,30,58,90,123,150,170} {
                \filldraw[black] (\x:1.9cm) circle(0.7pt);}
                \foreach \x in {15.5,41,73,107,139,164.5} {
                \filldraw[black] (\x:1.28cm) circle(0.7pt);}
\draw (20:1cm) -- (58:1.9cm);
\draw (52.5:1cm) -- (30:1.9cm);
\draw (90:1cm) -- (58:1.9cm);
\draw (52.5:1cm) -- (90:1.9cm);
\draw (90:1cm) -- (123:1.9cm);
\draw (127.5:1cm) -- (90:1.9cm);
\draw (160:1cm) -- (123:1.9cm);
\draw (127.5:1cm) -- (150:1.9cm);
\draw (20:1cm) -- (10:1.9cm);
\draw (160:1cm) -- (170:1.9cm);
\draw (15.5:1.28cm) -- (30:1.9cm);
\draw (164.5:1.28cm) -- (150:1.9cm);
\draw [very thick,dotted]  (164.5:1.28cm) -- (178:1cm);
\draw [very thick,dotted]  (15.5:1.28cm) -- (5:1cm);
\draw (52:.85cm)node {\tiny$e_{1}$};
\draw (20:.85cm)node {\tiny$e_{2}$};
\draw (121:.8cm)node {\tiny$e_{n-1}$};
\draw (158:.75cm)node {\tiny$e_{n-2}$};
\draw (87:.85cm)node {\tiny$e_{0}$};
\draw (11:1.3cm)node {\tiny$f_{3}$};
\draw (36:1.29cm)node {\tiny$f_{2}$};
\draw (67:1.28cm)node {\tiny$f_{1}$};
\draw (100:1.28cm)node {\tiny$f_{0}$};
\draw (129:1.31cm)node {\rotatebox{71}{\tiny$f_{n-1}$}};
\draw (154:1.28cm)node {\rotatebox{75}{\tiny$f_{n-2}$}};
\draw (10:2.05cm)node {\tiny$g_{3}$};
\draw (30:2.0cm)node {\tiny$g_{2}$};
\draw (58:2.0cm)node {\tiny$g_{1}$};
\draw (90:2.0cm)node {\tiny$g_{0}$};
\draw (123:2.05cm)node {\tiny$g_{n-1}$};
\draw (150:2.1cm)node {\tiny$g_{n-2}$};
\draw (170:2.1cm)node {\tiny$g_{n-3}$};
\end{tikzpicture}

 
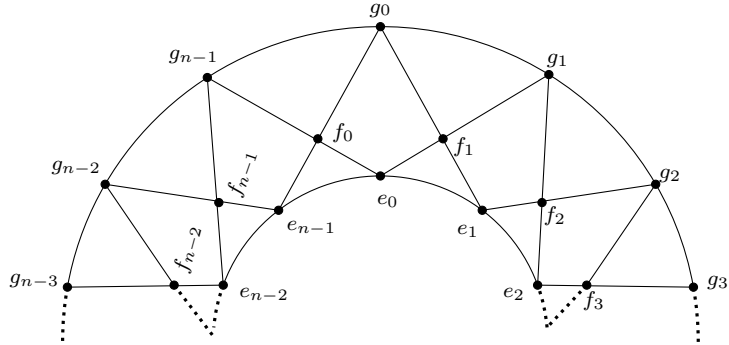
\captionof{figure}{$L(Y_{n})$ of prism graph $Y_{n}$}\label{fig2}
 \end{center}

 As motivated by the Theorem \ref{t1}, we obtain $\psi_E (Y_{n})\geq 3$. Furthermore, we will show that $\psi_E (Y_{n})=3$ for $n\geq6.$  \\

In order to calculate the edge distances for family of prism graphs $Y_n$, consider the line graph $L(Y_n)$  as shown in Figure \ref{fig2}.\\
Define $S_{i}(f_{0})=\{f\in E(Y_{n}):d_E(f_{0}, f)= i\}$. For $\psi_E (Y_{n})$ with $n\geq 6,$ we can locate the sets $S_{i}(f_{0})$ that are represented in the Table \ref{tb1}. It is clearly observed from Figure \ref{fig2} that $S_{i}(f_{0})= \emptyset$ for $i\geq k+2.$ From the above mentioned sets $S_{i}(f_{0}),$  it is clear that they can be utilized to define the edge distance between two arbitrary edges of $E(Y_{n})$ in the subsequent way.
\begin{table}[h]
\centering
\caption{$S_{i}(f_{0})$ for $Y_{n}$}\label{tb1}
\vspace{1ex}
\begin{tabular}{|l|l|l|}\hline

$n$ & $i $&$ S_{i}(f_{0})$\\ \hline
  & 1 & $\{e_{0}, g_{0}, e_{n-1}, g_{n-1}\}$\\ \hline
  & 2 $\leq i\leq k$& $\{f_{i-1}, e_{i-1}, g_{i-1}, f_{n+1-i}, e_{n-i}, g_{n-i}$\}\\ \hline
$2k(k\geq 3)$ &$ k+1$ & $\{f_{k}\}$\\ \hline
$2k+1(k\geq 3)$ &$ k+1 $& $\{f_{k}, e_{k}, g_{k}, f_{k+1}\}$\\ \hline
\end{tabular}
\end{table}\\
\vspace{1ex}
The symmetry in Figure \ref{fig2} shows that $ d_E(f_{i}, f_{j})= d_E(f_{0}, f_{|j-i|})$ for $0 \leq |j-i|\leq n-1.$ If $n= 2k$, where $k\geq 3,$ we have

$\,\,\ d_E(e_{i}, e_{j})= d_E(g_{i}, g_{j})= \left\{ \begin{array}{ll}

d_E(f_{0}, e_{|j-i|})-1, & {\rm if}\  0\leq |j-i|< k;     \\[2mm]
d_E(f_{0}, e_{|j-i|}),  & {\rm if}\  k\leq |j-i|\leq n-1,     \\[2mm]

\end{array} \right.$\\
\vspace{2ex}

$\,\,\ d_E(f_{i}, e_{j})= d_E(f_{i}, g_{j})= \left\{ \begin{array}{ll}

d_E(f_{0}, e_{|j-i|}),    & {\rm if}\  0\leq |j-i|\leq n-1,  \text{ for } i\leq j;     \\[2mm]
d_E(f_{0}, e_{|j-i|})-1,    & {\rm if}\  1\leq |j-i|< k,  \text{ for }  i> j;     \\[2mm]
d_E(f_{0}, e_{|j-i|}),            & {\rm if}\  |j-i|=k,  \text{ for }  i> j;     \\[2mm]
d_E(f_{0}, e_{|j-i|})+1,    & {\rm if}\  k < |j-i|\leq n-1,  \text{ for }  i> j,     \\[2mm]

\end{array} \right.$\\
\vspace{2ex}

$\,\,\, d_E(e_{i}, g_{j})= \left\{ \begin{array}{ll}

d_E(f_{0}, e_{|j-i|})+1,  & {\rm if}\  |j-i|=0;     \\[2mm]
d_E(f_{0}, e_{|j-i|}),    & {\rm if}\  1\leq |j-i|< k;     \\[2mm]
d_E(f_{0}, e_{|j-i|})+1,    & {\rm if}\  k \leq |j-i|\leq n-1.     \\[2mm]

\end{array} \right.$\\
\vspace{2ex}

If $n= 2k+1$ where $k\geq 3,$ we have\\

$\,\,\ d_E(e_{i}, e_{j})= d_E(g_{i}, g_{j})= \left\{ \begin{array}{ll}

d_E(f_{0}, e_{|j-i|})-1,  & {\rm if}\  0\leq |j-i|\leq k;     \\[2mm]
d_E(f_{0}, e_{|j-i|}),    & {\rm if}\  k< |j-i|\leq n-1,     \\[2mm]

\end{array} \right.$\\
\vspace{2ex}

$\,\,\ d_E(f_{i}, e_{j})= d_E(f_{i}, g_{j})= \left\{ \begin{array}{ll}

d_E(f_{0}, e_{|j-i|}),    & {\rm if}\  0\leq |j-i|\leq n-1   \text{ for }   i\leq j;     \\[2mm]
d_E(f_{0}, e_{|j-i|})-1,    & {\rm if}\  1\leq |j-i|\leq k   \text{ for }   i> j  ;     \\[2mm]
d_E(f_{0}, e_{|j-i|})+1,    & {\rm if}\  k < |j-i|\leq n-1   \text{ for }   i> j,     \\[2mm]

\end{array} \right.$\\
\vspace{2ex}

$\,\,\, d_E(e_{i}, g_{j})= \left\{ \begin{array}{ll}

d_E(f_{0}, e_{|j-i|})+1,  & {\rm if}\  |j-i|=0;     \\[2mm]
d_E(f_{0}, e_{|j-i|}),    & {\rm if}\  1\leq |j-i|\leq k;     \\[2mm]
d_E(f_{0}, e_{|j-i|})+1,    & {\rm if}\  k < |j-i|\leq n-1.     \\[2mm]

\end{array} \right.$\\

As a result, if we know the edge distance $d_E(f_{0},f)$ for any $f\in E(Y_n)$ then one can recreate the edge distances between any two edges from $E(Y_n).$

\begin{lemma}\label{l2}
$\psi_E (Y_{n})=3,$ for $n=2k$, $k\geq 3.$
\end{lemma}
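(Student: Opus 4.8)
The plan is to follow exactly the two-step template used for the even sunlet graph in Lemma \ref{sl2}. For the \emph{lower bound}, Theorem \ref{t1} gives $\dim_E(Y_n)=3$ for all $n\ge 3$, and since every edge version of doubly resolving set is in particular an edge version of resolving set (so that $\dim_E(Y_n)\le\psi_E(Y_n)$), we immediately obtain $\psi_E(Y_n)\ge 3$. Hence the entire content of the lemma is the reverse inequality $\psi_E(Y_n)\le 3$, i.e. the exhibition of a single edge version of doubly resolving set of cardinality $3$.

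For the \emph{upper bound}, mirroring the set $\{e_0,e_1,e_k\}$ used in the sunlet case, I would test $D_E^{\ast}=\{f_0,f_1,f_k\}$. The point of keeping $f_0$ in the set is that, by the definition of the layers $S_i(f_0)$ in Table \ref{tb1}, the first coordinate of $r_E(h,D_E^{\ast})$ equals the layer index $i$ whenever $h\in S_i(f_0)$; the two remaining probes $f_1$ and $f_k$ play the role of a ``near'' and an ``antipodal'' reference that break the reflective symmetry of the distance-from-$f_0$ profile. Using the piecewise formulas for $d_E(f_i,\cdot)$, $d_E(e_i,\cdot)$ and $d_E(e_i,g_j)$ recorded above, I would compute $r_E(h,D_E^{\ast})=\bigl(d_E(h,f_0),d_E(h,f_1),d_E(h,f_k)\bigr)$ for every edge $h\in E(Y_n)$ and arrange the results in a table indexed by $S_i(f_0)$, exactly as in Tables \ref{stb3} and \ref{stb4}.

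For the \emph{verification}, recall that $D_E^{\ast}$ fails to edge doubly resolve a pair $h_1\neq h_2$ precisely when the difference $r_E(h_1,D_E^{\ast})-r_E(h_2,D_E^{\ast})$ is a constant vector. Because the first coordinate reads off the layer index, this splits into two checks. First, for $h_1,h_2$ in the \emph{same} layer $S_i(f_0)$ the first coordinate of the difference is $0$, so I only need $r_E(h_1,D_E^{\ast})\neq r_E(h_2,D_E^{\ast})$, i.e. the representations are pairwise distinct inside each layer. Second, for $h_1\in S_i(f_0)$ and $h_2\in S_j(f_0)$ with $i\neq j$ the first coordinate of the difference is $i-j\neq 0$, so I must rule out the difference being the constant vector $(i-j,i-j,i-j)$; equivalently, at least one of the $f_1$- or $f_k$-coordinates must differ from $i-j$. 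Both checks are read directly off the table.

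The main obstacle is the second, cross-layer, check together with the requirement that it hold \emph{uniformly} in $n=2k$ for all $k\ge 3$. The within-layer distinctness is the easy part, since inside a fixed $S_i(f_0)$ the $f_1$-coordinate separates the two symmetric arcs while the $f_1$- and $f_k$-coordinates together separate the $e$-, $g$- and $f$-edges lying on each arc. The cross-layer comparison is delicate because the formulas for $d_E(\cdot,f_1)$ and $d_E(\cdot,f_k)$ are piecewise, with $\pm1$ corrections that depend both on the orientation ($i\le j$ versus $i>j$) and on whether the index gap crosses the threshold $k$; I expect to verify that for any two layers the offset induced on the second or third coordinate never coincides with $i-j$, and to confirm that the antipodal choice $f_k$ is exactly what prevents a constant offset from arising for the far layers. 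If $\{f_0,f_1,f_k\}$ should fail a boundary case, the natural fallback is to replace $f_k$ by an adjacent edge such as $e_k$ or $g_k$ and repeat the same table-based verification.
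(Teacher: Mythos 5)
Your lower bound is exactly the paper's argument ($\psi_E(Y_n)\ge\dim_E(Y_n)=3$ via Theorem \ref{t1}), and your general verification template (anchor the layers $S_i(f_0)$, check within-layer distinctness and rule out cross-layer constant differences) is sound in principle. But the upper bound, which you correctly identify as the entire content of the lemma, fails at the first step: the candidate set $D_E^{\ast}=\{f_0,f_1,f_k\}$ consists entirely of spoke edges, and \emph{no} set of spokes can even be an edge version of resolving set of $Y_n$, let alone a doubly resolving one. The prism graph has an automorphism swapping the inner and outer cycles while fixing every spoke; it induces an automorphism of $L(Y_n)$ that fixes each $f_i$ and swaps $e_j\leftrightarrow g_j$, so $d_E(f_i,e_j)=d_E(f_i,g_j)$ for all $i,j$. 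This identity is written explicitly in the paper's own distance formulas ($d_E(f_i,e_j)=d_E(f_i,g_j)=\cdots$). Consequently $r_E(e_j,D_E^{\ast})=r_E(g_j,D_E^{\ast})$ for every $j$: each pair $\{e_j,g_j\}$ is not resolved at all, for every $n$ and every $j$. This is a structural failure, not the ``boundary case'' your fallback clause anticipates.

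The fallback does not rescue the plan either. Take $n=6$ ($k=3$) and $\{f_0,f_1,e_3\}$: one computes $r_E(f_2)=\bigl(3,2,2\bigr)=r_E(g_2)$, so this set is again not even resolving; the choice $\{f_0,f_1,g_3\}$ fails symmetrically with $e_2$ in place of $g_2$. The underlying issue is that ``mirroring'' the sunlet set $\{e_0,e_1,e_k\}$ by spokes does not transfer, because in the sunlet graph no nontrivial automorphism fixes the chosen probes, whereas in the prism the spokes are collectively fixed by the inner/outer swap; any correct probe set must contain cycle edges that break this symmetry. The paper's proof instead takes $D_E^{\ast}=\{e_0,e_{k-1},f_{k+1}\}$ (two inner-cycle edges plus one spoke) and verifies the doubly resolving property from the full table of representations (Table \ref{tb3}). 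A repaired version of your argument would need such a set, together with the table computation you defer --- and note that once $f_0\notin D_E^{\ast}$, your convenient ``first coordinate equals the layer index'' bookkeeping is no longer available, so the cross-layer check must exclude \emph{all} constant difference vectors, not only $(i-j,i-j,i-j)$.
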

\begin{proof}
 The Table \ref{tb3} demonstrate that edge version of representations of $Y_{n}$ in relation to the set $D_E^{\ast} = \{e_{0}, e_{k-1}, f_{k+1}\}$ in a different manner.\\

\begin{table}[h!]
\centering
\caption{Vectors of edge metric coordinates for $Y_{n},$ $n=2k,$ $k\geq 3$}\label{tb3}
\vspace{1ex}
\begin{tabular}{|l|l|l|}\hline
$i$ & $ S_{i}(f_{0})$ & $D_E^{\ast}= \{{e_{0}, e_{k-1}, f_{k+1}}\}$ \\ \hline
$0$ & $f_{0}$  & $(1, k, k)$\\ \hline
$1$ & $e_{0}$  & $(0, k-1, k)$\\
    & $g_{0}$  & $(2, k, k)$\\
    & $e_{n-1}$  & $(1, k, k-1)$\\
    & $g_{n-1}$  & $(2, k+1, k-1)$\\ \hline
$2$ & $f_{1}$  & $(1, k-1, k+1)$\\
    & $e_{1}$  & $(1, k-2, k)$\\
    & $g_{1}$  & $(2, k-1, k)$\\
    & $f_{n-1}$  & $(2, k, k-1)$\\
    & $e_{n-2}$  & $(2, k-1, k-2)$\\
    & $g_{n-2}$  & $(3, k, k-2)$\\ \hline
$3\leq i \leq k$  & $f_{i-1}$  & $(i-1, k+1-i, k+3-i)$\\
    & $e_{i-1}$  & $(i-1, k-i, k+2-i)$\\
    & $g_{i-1}$  & $\,\,\ = \left\{ \begin{array}{ll}
                   $($k$, 2, 2)$, & {\rm if}\     $$i=k$$; \\[2mm]\\
                   $$(i, k+1-i, k+2-i)$$, & {\rm if}\ $$i< k$$. \\[2mm]\\
\end{array} \right.$\\
    & $f_{n+1-i}$ & $\,\,\ = \left\{ \begin{array}{ll}
                  $$(k, 2, 0)$$, & {\rm if}\ $$i=k$$; \\[2mm]\\
                     $$(i, k+2-i, k+1-i)$$, & {\rm if}\ $$i $$<$$ k$$ \\[2mm]\\
\end{array} \right.$\\
  & $e_{n-i}$  &  $\,\,\ = \left\{ \begin{array}{ll}
                  $($k, 1, 1$)$, & {\rm if}\ $$i=k$$; \\[2mm]\\
                     $$(i, k+1-i, k-i)$$, & {\rm if}\ $$i< k$$ \\[2mm]\\
\end{array} \right.$\\
    & $g_{n-i}$  & $\,\,\ = \left\{ \begin{array}{ll}
                   $$(k+1, 2, 1)$$, & {\rm if}\ $$i=k$$; \\[2mm]\\
                     $$(i, k+2-i, k-i)$$, & {\rm if}\ $$i< k$$ \\[2mm]\\
\end{array} \right.$\\ \hline
$i=k+1$ & $f_{k}$  & $(k, 1, 2)$\\\hline
\end{tabular}
\end{table}
\vspace{1ex}
Now from Table \ref{tb3}, as $e_{0} \in D_E^{\ast}$, so the first edge version of metric coordinate of the vector of $f_{0}\in S_{i}(f_{0})$ is equal to $1$. For each $i\in \{1,2,3,\ldots,k+1\}$, one can easily check that there are no two edges $h_1, h_2 \in S_{i}(f_{0})$ such that $ r_E(h_1, D_E^{\ast})- r_E(h_2, D_E^{\ast})=0$. Also, for each $i, j\in \{1,2,3,\ldots,k+1\}, i\neq j$, there are no two edges $h_1 \in S_{i}(f_{0})$ and $h_2 \in S_{j}(f_{0})$ such that $ r_E(h_1,D_E^{\ast})- r_E(h_2,D_E^{\ast})=i- j$. In this manner, the set $D_E^{\ast}= \{{e_{0}, e_{k-1}, f_{k+1}}\}$ is the minimal edge version of doubly resolving set for $Y_{n}$ with $n=2k$, $k\geq 3$ and hence Lemma \ref{l2} holds.
\end{proof}
\vspace{1ex}
\begin{lemma}\label{l3}
$\psi_E (Y_{n})=3$, for $n=2k+1$, $k\geq 3.$
\end{lemma}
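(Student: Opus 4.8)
The plan is to follow the same two-step strategy used for Lemma~\ref{l2}, adapted to the odd parameter $n=2k+1$. First I would dispose of the lower bound: since Theorem~\ref{t1} gives $\dim_E(Y_n)=3$ and every edge version of doubly resolving set is an edge version of resolving set, the inequality $\dim_E(Y_n)\le\psi_E(Y_n)$ forces $\psi_E(Y_n)\ge 3$. It therefore remains only to exhibit a three-element set $D_E^{\ast}\subseteq E(Y_n)$ that is an edge version of doubly resolving set, which gives the matching upper bound $\psi_E(Y_n)\le 3$ and hence equality.

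For the upper bound I would guess an explicit candidate by analogy with the even case, where $D_E^{\ast}=\{e_0,e_{k-1},f_{k+1}\}$ worked; a natural first try for $n=2k+1$ is a set of the same shape, perhaps with one index shifted by one, for instance $\{e_0,e_{k-1},f_{k+1}\}$ or $\{e_0,e_{k},f_{k+1}\}$. The essential feature to preserve is $e_0\in D_E^{\ast}$, because then the first coordinate of every representation is $d_E(\cdot,e_0)$, and the partition of $E(Y_n)$ into the level sets $S_i(f_0)$ from Table~\ref{tb1} organizes the bookkeeping. Using the piecewise edge-distance formulas for $n=2k+1$ stated just before this lemma, I would compute $r_E(h,D_E^{\ast})$ for every edge $h$, grouping the rows by the level $i=d_E(f_0,h)$ into a table analogous to Table~\ref{tb3}.

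The verification itself is the heart of the argument. Recall that $D_E^{\ast}$ is an edge version of doubly resolving set precisely when no two distinct edges $h_1,h_2$ have a constant difference vector, that is, when $r_E(h_1,D_E^{\ast})-r_E(h_2,D_E^{\ast})$ is never of the form $(c,c,c)$. I would check this in two stages: (i) for $h_1,h_2$ lying in the same level $S_i(f_0)$, confirm directly from the table that their representation difference is never a constant tuple (in particular never zero); and (ii) for $h_1\in S_i(f_0)$ and $h_2\in S_j(f_0)$ with $i\neq j$, confirm that the difference again fails to be constant. As in Lemma~\ref{l2}, the first coordinate already separates many pairs, and when it does not, one of the remaining two coordinates (the distances to the other two chosen edges) must do so; this is exactly the computation that the tabulation makes transparent.

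The step I expect to be the main obstacle is the ``far'' level. For $n=2k+1$ the set $S_{k+1}(f_0)=\{f_k,e_k,g_k,f_{k+1}\}$ contains four edges, rather than the single edge $\{f_k\}$ that occupies the top level in the even case; this breaks the clean near-antipodal symmetry and produces the most delicate tie-breaking, since several edges there are nearly equidistant from the chosen probes. I would therefore pay special attention to this level and to the transitional levels $i=1,2$, where the piecewise formulas change form, verifying by hand that the third coordinate separates the otherwise-coincident pairs. Once every level and every cross-level comparison is checked, the no-constant-difference condition holds throughout, $D_E^{\ast}$ is an edge version of doubly resolving set of size $3$, and Lemma~\ref{l3} follows.
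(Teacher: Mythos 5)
Your overall strategy coincides with the paper's: the lower bound $\psi_E(Y_n)\ge 3$ via Theorem \ref{t1} and $\dim_E(Y_n)\le\psi_E(Y_n)$ is exactly what the paper uses, and the upper bound is indeed obtained by exhibiting one explicit $3$-element set containing $e_0$ and checking, level set by level set, that no two edges have a constant difference of representations. The gap is in the only substantive part of that plan: the choice of $D_E^{\ast}$. Both candidates you name fail, and not for reasons of fine-tuning an index --- they are not even edge version of \emph{resolving} sets. Take $D_E^{\ast}=\{e_0,e_k,f_{k+1}\}$: in $L(Y_n)$ the spoke $f_{k+1}$ is adjacent to both outer edges $g_k$ and $g_{k+1}$, and from the odd-case distance formulas $d_E(g_k,e_0)=k+1=d_E(g_{k+1},e_0)$ and $d_E(g_k,e_k)=2=d_E(g_{k+1},e_k)$, so $r_E(g_k,D_E^{\ast})=(k+1,2,1)=r_E(g_{k+1},D_E^{\ast})$ for every $k\ge 3$. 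Likewise, for $D_E^{\ast}=\{e_0,e_{k-1},f_{k+1}\}$ one computes $r_E(f_{n-1},D_E^{\ast})=(2,k+1,k)=r_E(g_{n-1},D_E^{\ast})$. The reason the analogy with Lemma \ref{l2} breaks is precisely the parity: for $n=2k$ the pair $f_{n-1},g_{n-1}$ is separated by $e_{k-1}$ because the index gap $n-k$ equals $k$, the boundary case where spoke-to-edge and edge-to-edge distances differ ($d_E(f_{n-1},e_{k-1})=k$ while $d_E(g_{n-1},e_{k-1})=k+1$); for $n=2k+1$ that gap is $k+1>k$ and the two distances coincide.

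The paper's proof avoids this by taking $D_E^{\ast}=\{e_0,e_k,g_{k+2}\}$ --- note the third element is an outer-cycle edge, not a spoke --- and then carrying out the tabulation you describe (Table \ref{tb4}). Since your proposal explicitly defers the verification step (``I would compute \ldots and verify''), and that verification collapses for both sets you propose, the argument as written does not establish the upper bound; it would need the correct set (or some other genuinely doubly resolving one) together with the completed table check. The lower-bound half of your argument is correct and is the same as the paper's.
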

\begin{proof}
The Table \ref{tb4} demonstrate that the edge version of representations of $Y_{n}$ in relation to the set $D_E^{\ast} = \{e_{0}, e_{k}, g_{k+2}\}$ in a different way.\\

\begin{table}
\centering
\caption{Vectors of edge metric coordinates for $Y_{n},$ $n=2k+1,$ $k\geq 3$}\label{tb4}
\vspace{1ex}
\begin{tabular}{|l|l|l|}\hline
$i$ & $ S_{i}(f_{0})$ & $D_E^{\ast}= \{{e_{0}, e_{k}, g_{k+2}}\}$ \\ \hline
$0$ & $f_{0}$  & $(1, k+1, k-1)$\\ \hline
$1$ & $e_{0}$  & $(0, k, k)$\\
    & $g_{0}$  & $(2, k+1, k-1)$\\
    & $e_{n-1}$  & $(1, k, k-1)$\\
    & $g_{n-1}$  & $(2, k+1, k-2)$\\ \hline
$2$ & $f_{1}$  & $(1, k, k)$\\
    & $e_{1}$  & $(1, k-1, k+1)$\\
    & $g_{1}$  & $(2, k, k)$\\
    & $f_{n-1}$  & $(2, k, k-2)$\\
    & $e_{n-2}$  & $\,\,\ = \left\{ \begin{array}{ll}
                   (2, 2, 2), & {\rm if}\ k=3; \\[2mm]\\
                   (2, k-i, k-2), & {\rm if}\ k< 3. \\[2mm]\\
\end{array} \right.$\\
    & $g_{n-2}$  & $(3, k, k-3)$\\ \hline
$3\leq i \leq k$  & $f_{i-1}$  & $(i-1, k+2-i, k+4-i)$\\
                  & $e_{i-1}$  & $(i-1, k+1-i, k+4-i)$\\
                  & $g_{i-1}$  & $(i, k+2-i, k+3-i)$\\
    & $f_{n+1-i}$ & $\,\,\ = \left\{ \begin{array}{ll}
                  (k, 2, 1), & {\rm if}\ i=k; \\[2mm]\\
                  (i, k+2-i, k-i), & {\rm if}\ i+1 \leq k \\[2mm]\\
\end{array} \right.$\\
  & $e_{n-i}$  &  $\,\,\ = \left\{ \begin{array}{ll}
                  (k, 1, 2), & {\rm if}\ i=k; \\[2mm]\\
                  (i,2, 2), & {\rm if}\ i+1=k; \\[2mm]\\
                  (i, k+1-i, k-i), & {\rm if}\ i+1 < k \\[2mm]\\
\end{array} \right.$\\
    & $g_{n-i}$  & $\,\,\ = \left\{ \begin{array}{ll}
                   (k+1, 2, 1), & {\rm if}\ i=k; \\[2mm]\\
                     (i+1, k+2-i, k-1-i), & {\rm if}\ i+1 \leq k \\[2mm]\\
\end{array} \right.$\\ \hline
$i=k+1$ & $f_{k}$  & $(k, 1, 3)$\\
        & $e_{k}$  & $(k, 0, 3)$\\
        & $g_{k}$  & $(k+1, 2, 2)$\\
        & $f_{k+1}$  & $(k+1, 1, 2)$\\\hline
\end{tabular}
\end{table}
\vspace{1ex}
    Now from Table \ref{tb4}, as $e_{0} \in D_E^{\ast}$, so the first edge version of metric coordinate of the vector of $f_{0}\in S_{i}(f_{0})$ is equal to $1$.
Similarly for each $i\in \{1,2,3,\ldots,k+1\}$, one can easily find that here are no two edges $h_1, h_2 \in S_{i}(f_{0})$ such that $ r_E(h_1,D_E^{\ast})- r_E(h_2,D_E^{\ast})=0$. Likewise, for every $i, j\in \{1,2,3,\ldots,k+1\}, i\neq j$, there are no two edges $h_1 \in S_{i}(f_{0})$ and $h_2 \in S_{j}(f_{0})$ such that $r_E(h_1,D_E^{\ast})- r_E(h_2,D_E^{\ast})=i- j$. Like so, the set $D_E^{\ast}= \{{e_{0}, e_{k}, g_{k+2}}\}$ is the minimal edge version of doubly resolving set for $Y_{n}$ with $n= 2k+1$, $k\geq 3$ and consequently Lemma \ref{l3} holds.\\
\end{proof}
It is displayed from the whole technique that $\psi_E(Y_{n})=3$, for $n\geq 6$. We state the resulting main theorem by using Lemma \ref{l2} and Lemma \ref{l3} as mentioned below;\\

\begin{thm}
Let $Y_{n}$ be the prism graph for $n\geq 6.$ Then $\psi_E (Y_{n})=3.$
\end{thm}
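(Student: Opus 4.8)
The plan is to prove the two bounds $\psi_E(Y_n)\ge 3$ and $\psi_E(Y_n)\le 3$ separately and then combine them. The lower bound is immediate: Theorem \ref{t1} gives $\dim_E(Y_n)=3$ for $n\ge 3$, and since every edge version of doubly resolving set is in particular an edge version of resolving set we have $\dim_E(Y_n)\le\psi_E(Y_n)$, hence $\psi_E(Y_n)\ge 3$. All the real work is therefore in the upper bound, which I would obtain by exhibiting, for every $n\ge 6$, a three-edge set that edge doubly resolves $Y_n$; this naturally splits into the even case $n=2k$ (Lemma \ref{l2}) and the odd case $n=2k+1$ (Lemma \ref{l3}).

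For the upper bound I would first reduce the doubly-resolving test to a single injectivity statement. Writing the candidate set as $D_E^{\ast}=\{e_0,w_1,w_2\}$ with reference landmark $e_0$, two distinct edges $h_1,h_2$ fail to be edge doubly resolved exactly when $r_E(h_1,D_E^{\ast})-r_E(h_2,D_E^{\ast})$ is a constant vector $(c,c,c)$; evaluating the first coordinate forces $c=d_E(h_1,e_0)-d_E(h_2,e_0)$, so this happens if and only if the reduced pair $\bigl(d_E(h,w_1)-d_E(h,e_0),\,d_E(h,w_2)-d_E(h,e_0)\bigr)$ takes the same value at $h_1$ and $h_2$. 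Thus it suffices to show that this reduced map is injective on all $3n$ edges of $Y_n$ for the chosen $D_E^{\ast}$, namely $\{e_0,e_{k-1},f_{k+1}\}$ when $n=2k$ and $\{e_0,e_k,g_{k+2}\}$ when $n=2k+1$.

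To verify injectivity efficiently I would lean on the reduction already prepared in the excerpt. By the rotational symmetry of $L(Y_n)$ together with the piecewise formulas for $d_E(e_i,e_j)$, $d_E(f_i,e_j)$, $d_E(e_i,g_j)$ and their analogues, every edge distance is recoverable from the single-source distances $d_E(f_0,\cdot)$ recorded in Table \ref{tb1}. Grouping the edges into the distance classes $S_i(f_0)$ then lets me write down the $3n$ representation vectors in blocks, as in Tables \ref{tb3} and \ref{tb4}, pass to the reduced pairs, and check that no two of them coincide. In the bulk range $3\le i\le k-1$ the reduced pairs depend on the class only through the single offset $k-2i$ and are readily seen to be distinct both within a class and between consecutive classes (the first reduced coordinate already separates neighbouring classes), so the argument is short there.

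The step I expect to be the genuine obstacle is guaranteeing the completeness of this finite case analysis, particularly at the boundary classes $i\in\{0,1,2\}$ and $i\in\{k,k+1\}$, where the piecewise distance formulas switch branches and several table entries collapse to special values. Here one must confirm that the particular offsets of $e_{k-1}$ (resp.\ $e_k$) and $f_{k+1}$ (resp.\ $g_{k+2}$) relative to $e_0$ were chosen precisely so that these exceptional vectors stay distinct from one another and from the bulk vectors after reduction; verifying that no such cross-boundary coincidence occurs is exactly where the content of Lemmas \ref{l2} and \ref{l3} sits. Once both parity cases are settled we obtain $\psi_E(Y_n)\le 3$, which with the lower bound yields $\psi_E(Y_n)=3$ for all $n\ge 6$.
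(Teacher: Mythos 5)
Your proposal is correct and is essentially the paper's own proof: the identical lower bound via $\dim_E(Y_n)=3$ together with $\dim_E(G)\leq\psi_E(G)$, the identical witness sets $\{e_0,e_{k-1},f_{k+1}\}$ for $n=2k$ and $\{e_0,e_k,g_{k+2}\}$ for $n=2k+1$, and the same verification scheme through the distance classes $S_i(f_0)$ and the block tables of representation vectors. The one difference is to your credit but does not change the route: your reduction to injectivity of the map $h\mapsto\bigl(d_E(h,w_1)-d_E(h,e_0),\,d_E(h,w_2)-d_E(h,e_0)\bigr)$ rules out \emph{every} constant difference vector at once, whereas the paper's stated check only excludes the constants $0$ (within a class) and $i-j$ (across classes), a test that is complete only when the first landmark coincides with the class center --- here the classes are centered at $f_0$ while the landmark is $e_0$ --- so your formulation quietly repairs a small logical looseness in the paper's write-up.
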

\section{Conclusion}
In this article, we computed the minimal edge version of doubly resolving sets and its cardinality $\psi_E(G)$ by considering $G$ as a family of $n$-sunlet graph $S_{n}$ and prism graph $Y_{n}$. In case of $n$-sunlet graphs, the graph is interesting to consider in the sense that its edge version of metric dimension $\dim_E(S_{n})$ is dependent on the parity of $n$ for both even and odd cases. The cardinality $\psi_E(S_{n})$ of minimal edge version of doubly resolving set of $n$-sunlet graph $S_{n}$ is independent from the parity of $n$. In the case of prism graph $Y_{n}$, the edge version of metric dimension $\dim_E (Y_{n})$ and the cardinality $\psi_E(Y_{n})$ of its minimal edge version of doubly resolving set are same for every $n\geq 6.$
\begin{op} Compute edge version of doubly resolving sets for some generalized petersen graphs.
\end{op}

\end{document}